\begin{document}
	\newtheorem{thm}{Theorem}
	\newtheorem{definition}{Definition}
	\newtheorem{lem}{Lemma}
	\newtheorem{Result}{Result}
	\newtheorem{cor}{Corollary}
	\newtheorem{rem}{Remark}
	\newtheorem{note}{Note}
	
	\begin{center}
		\Large{\bf  Nonsplit Domination Cover Pebbling Number for Some Class of Middle Graphs }
	\end{center}    
	\begin{center}
		A. Lourdusamy$^{1}$, I. Dhivviyanandam$^{2}$, Lian Mathew$^{3}$\\
		$^{1}$Department of Mathematics,\\
		St. Xavier's College (Autonomous), Palayamkottai\\
		Email: lourdusamy15@gmail.com; https://orcid.org/0000-0001-5961-358X.\\
		$^{2}$ Reg. No : 20211282091003, Department of Mathematics,\\
		St. Xavier's College (Autonomous), Palayamkottai\\
		Affiliated to Manonmaniam Sundaranar University, Abisekapatti-627012, Tamil Nadu, India,\\
		Email: divyanasj@gmail.com; https://orcid.org/0000-0002-3805-6638.\\
		$^{3}$Department of Mathematics\\
		CHRIST(Deemed to be university), Pune- Lavasa Campus, India.\\
		
		Email: lianmathew64@gmail.com; https://orcid.org/0000-0002-4926-7756.\\
	\end{center}        
	\begin{abstract}
		
		Let $G$ be a connected graph. A pebbling move is defined as taking two pebbles from one vertex and placing  one pebble to an adjacent vertex and throwing away the other pebble. The non-split domination cover pebbling number, $\psi_{ns}(G)$, of a graph $G$ is the minimum of pebbles that must be placed on $V(G)$ such that after a sequence of pebbling moves, the set of vertices with a pebble forms a non-split dominating set of $G$, regardless of the initial configuration of pebbles. We discuss some basic results, NP-completeness of non-split domination number, and determine $\psi_{ns}$ for some families of Middle graphs.
		
		\bigskip \noindent Keywords: non-split dominating set, non-split domination cover pebbling number, cover pebbling number, Middle graphs.
		
		\bigskip \noindent 2020 AMS Subject Classification: : 05C12, 05C25, 05C38, 05C76. 
		
	\end{abstract}
	\section{Introduction}
	Lagarias and Saks were the first ones to introduce the concept of pebbling and  F.R.K. Chung\cite{1} used the concept in pebbling to solve a number theoretic conjecture. Then many others followed suit including Glenn Hulbert who published a survey of pebbling  variants\cite{2}. The subject of graph pebbling has seen massive growth after Hulbert's survey. In the past 30 years so many new variants in graph pebbling have been  developed which can be applied to the field of transportation, computer memory allocation, game theory, and the installation of mobile towers. \\
	
	\noindent Let us denote $G'$s vertex and edge sets as $V (G) $and $E (G) $, respectively. Consider a graph with a fixed number of pebbles at each vertex. One pebble is thrown away and the other is placed on an adjacent vertex when two pebbles are removed from a vertex. This  process is known as a pebble move. The pebbling number of a vertex $v$ in a graph $G$ is the smallest number $\pi(G, v)$, allowing us to shift a pebble to $v$ using a sequence of  pebbling move, regardless of how these pebbles are located on G's vertices. The pebbling number, $\pi(G)$, of a graph $G$ is the maximum of $\pi(G, v)$ over all the vertices $v$ of a graph. Considering the concept of cover pebbling \cite{7} and  non-split domination \cite{6} we develop a new concept, called the non-split domination cover pebbling  number of a graph, denoted by $\psi_{ns}(G)$. In paper\cite{7} ``The cover pebbling number, $\lambda(G)$ is defined
	as the minimum number of pebbles required such that given any initial configuration of at least $\lambda(G)$ pebbles, it is possible to make a series of pebbling
	moves to place at least one pebble on every vertex of $G$" and in \cite{6} The domination cover pebbling number is defined as `` the minimum
	number of pebbles required so that any initial configuration of pebbles can be
	shifted by a sequence of pebbling moves so that the set of vertices that
	contain pebbles form a dominating set $S$ of $G$".  Kulli, V.R et al. introduced the non-split domination number in\cite{6}. A dominating set $D$ of a graph $G=(V, E)$ is a non-split dominating set if the induced $<V-D>$ is connected. The non-split domination number $\gamma_{ns}(G)$ of $G$ is the minimum cardinality of a non-split dominating set. We developed the concept of  non-split domination cover pebbling deriving from the concept of cover pebbling and  non-split domination in graphs. Thus, we arrived the definition of  the non-split domination cover pebbling number,$\psi_{ns}(G)$, of a graph $G$ as the minimum of pebbles that must be placed on $V(G)$ such that after a sequence of pebbling moves, the set of vertices with a pebble forms a non-split dominating set of $G$, regardless of the initial configuration of pebbles. We discuss the basic results and determine $\psi_{ns}$ for  path graphs, wheel graphs,  cycle graphs, and fan graph
	
	\section{Preliminaries}
	For graph-theoretic terminologies, the reader can refer to\cite{4,5}.

	\begin{Result} \cite{6}
		\begin{itemize}
			\item[1]. The non-split domination number of a complete graph $K_n$ is $\gamma_{ns} (K_n)= 1$.
			\item[2]. The non-split domination number of a Wheel graph is $\gamma_{ns} (W_n)= 1$.
			\item[3]. The non-split domination number of a path  is $\gamma_{ns} (P_n)= n-2$.
			\item[4]. The non-split domination number of Cycle is $\gamma_{ns}(C_n) = n-2$.
		\end{itemize}
	\end{Result}
	
	\begin{Result}\cite{8}
		The domination cover pebbling number of the wheel is $\psi(W_n)= n-2$.
	\end{Result}
	
	\begin{Result}\cite{7}
		The cover pebbling number of path $P_n$ is $\gamma(P_n)=2^{n}-1$.
	\end{Result}
	
	\section{Results}
	
	\begin{thm}
		For a simple connected graph $G$, $\psi(G) \leq \psi_{ns}(G) \le \sigma(G)$. \end{thm}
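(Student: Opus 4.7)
The plan is to prove both inequalities directly from the definitions, using only the set-theoretic relationships among non-split dominating sets, dominating sets, and $V(G)$ itself. No graph-specific construction or case analysis should be needed.

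For the left inequality $\psi(G)\le \psi_{ns}(G)$, I would start from the observation that, by definition, every non-split dominating set is in particular a dominating set. Fix any initial configuration of $\psi_{ns}(G)$ pebbles on $V(G)$. By the defining property of $\psi_{ns}(G)$, there is a sequence of pebbling moves after which the set of pebbled vertices $S$ is a non-split dominating set, hence a dominating set. Thus the very same configuration and the very same sequence of moves certifies that $\psi_{ns}(G)$ pebbles always suffice to reach a dominating configuration; since $\psi(G)$ is the least such number, $\psi(G)\le \psi_{ns}(G)$.

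For the right inequality $\psi_{ns}(G)\le \sigma(G)$, where $\sigma(G)$ denotes the (ordinary) cover pebbling number $\lambda(G)$, the idea is that cover pebbling is a strictly stronger demand than non-split domination cover pebbling. Fix any distribution of $\sigma(G)$ pebbles on $V(G)$. By the definition of the cover pebbling number, some sequence of pebbling moves places at least one pebble on every vertex. In the resulting configuration the pebbled set equals $V(G)$, which trivially dominates $G$, and the induced subgraph $\langle V-V(G)\rangle=\langle\emptyset\rangle$ is vacuously connected, so $V(G)$ qualifies as a non-split dominating set. Hence $\sigma(G)$ pebbles always suffice to reach a non-split dominating configuration, giving $\psi_{ns}(G)\le \sigma(G)$.

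The only subtle point in the argument is the edge-case handling of the empty induced subgraph $\langle\emptyset\rangle$ in the definition of non-split domination; this is the one spot where one has to appeal to the standard convention that the empty graph is connected (or, equivalently, that $V(G)$ itself is accepted as a non-split dominating set). Once that convention is in place, both inequalities are immediate from their respective definitions, and I would expect the formal write-up to be only a few lines long.
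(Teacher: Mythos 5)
The paper states this theorem without any proof, so there is nothing to compare your argument against; what you have written is the natural argument and it is essentially correct. Your reading of the undefined symbol $\sigma(G)$ as the cover pebbling number (called $\lambda(G)$ in the paper's introduction) is the only sensible one, and the left inequality is exactly as immediate as you say: a non-split dominating set is a dominating set, so any move sequence witnessing the $\psi_{ns}$ property also witnesses the $\psi$ property. The one point worth dwelling on is the edge case you already flag. Under Kulli and Janakiram's definition a non-split dominating set $D$ must have $\langle V-D\rangle$ connected, and taking $D=V(G)$ forces you to declare the empty graph connected; note that you cannot instead appeal to monotonicity, since a superset of a non-split dominating set need not be non-split dominating (shrinking $V-D$ can disconnect it), so the convention really is load-bearing. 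An alternative repair, consistent with how the paper actually uses the definition in its later theorems, is to read ``the set of vertices with a pebble forms a non-split dominating set'' as requiring only that the pebbled set \emph{contain} a non-split dominating set; under that reading, covering all of $V(G)$ with $\sigma(G)$ pebbles trivially suffices because $V(G)$ contains a minimum non-split dominating set. Either fix makes your two-line argument complete.
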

	
	\begin{thm}
		For a complete graph $K_n$ on $n$ vertices, the nonsplit domination cover pebbling number is, $\psi_{ns}(K_n)=1$
	\end{thm}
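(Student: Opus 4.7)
The plan is to bound $\psi_{ns}(K_n)$ from above and below separately, relying on the structure of $K_n$ together with Result~1, which already tells us $\gamma_{ns}(K_n)=1$.

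For the lower bound I would simply observe that any non-split dominating set must be nonempty, so at least one vertex of $K_n$ must carry a pebble at the end of the pebbling sequence; hence $\psi_{ns}(K_n) \ge 1$.

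For the upper bound I would fix an arbitrary initial configuration $C$ of exactly one pebble on $V(K_n)$. Let $v$ be the (unique) vertex carrying that pebble. Without making any pebbling move, the set of vertices currently holding a pebble is $\{v\}$. I then need to verify two things: (i) $\{v\}$ is a dominating set of $K_n$, which is immediate because every other vertex is adjacent to $v$ in the complete graph; and (ii) the induced subgraph on $V(K_n)\setminus\{v\}$ is connected, which is also immediate since this subgraph is $K_{n-1}$ (for $n\ge 2$; the case $n=1$ is vacuous). Therefore $\{v\}$ is a non-split dominating set, and the chosen configuration already satisfies the pebbling requirement without any moves.

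There is no genuine obstacle here, since the argument is essentially bookkeeping around the definition, with the only minor point being to be clear that the target configuration need not differ from the initial one (zero pebbling moves constitute a valid sequence). Combining the two bounds gives $\psi_{ns}(K_n)=1$.
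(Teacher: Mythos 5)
Your proposal is correct: one pebble is trivially necessary since a non-split dominating set is nonempty, and one pebble on any vertex $v$ suffices because $\{v\}$ dominates $K_n$ and $\langle V(K_n)\setminus\{v\}\rangle \cong K_{n-1}$ is connected, so zero moves are needed. The paper states this theorem without providing any proof, and your argument is exactly the natural one that fills that gap, including the sensible remark that an empty sequence of pebbling moves is admissible.
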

	\begin{thm}
		The non-split domination cover pebbling number of a wheel graph $W_n$ is, $\psi_{ns}(W_n)=\psi(W_n).$
	\end{thm}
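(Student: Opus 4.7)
By Theorem~1 we already have $\psi(W_n) \le \psi_{ns}(W_n)$, since every non-split dominating set is in particular a dominating set. Thus the only work is to establish the reverse inequality $\psi_{ns}(W_n) \le \psi(W_n)$. Using Result~2 I may assume the target size is $n - 2$, and I need to show that every initial distribution of $n - 2$ pebbles on $V(W_n)$ admits a sequence of pebbling moves after which the pebbled support is a non-split dominating set of $W_n$. The central structural fact I would exploit is that the hub $v_0$ alone is a non-split dominating set: $W_n - v_0$ is the rim cycle, which is connected, in agreement with $\gamma_{ns}(W_n) = 1$ from Result~1.

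My approach is a case analysis on the initial configuration $C$. If some vertex carries at least two pebbles, a single pebbling move suffices to place a pebble on the hub, after which the support contains $v_0$ and at most a few remaining rim vertices. If all pebbles are singletons, then exactly $n - 2$ vertices are pebbled and exactly two vertices of $W_n$ are empty; I would check that in every such arrangement the pebbled set $D$ dominates (the hub is universally adjacent to the rim, and the two unpebbled rim vertices are each adjacent to pebbled rim vertices) and that $V - D$ induces a connected subgraph (the two empty vertices, together with the hub if it is also empty, form a small subgraph whose connectivity follows from the hub's adjacency to every rim vertex).

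The delicate step is that, after a move, the resulting pebbled set must not only dominate but also leave $V - D$ connected; extraneous pebbles on the rim can fragment the complement into several arcs. To control this, I would concentrate pebbles at $v_0$ by moving pebbles toward the hub from one chosen rim vertex at a time, so that the unpebbled rim vertices always form a single contiguous arc (hence induce a path) and therefore $V - D$ remains connected after attaching the hub if necessary. The main obstacle is verifying uniformly over all $n - 2$-pebble configurations that this strategy of ``hub-first, contiguous-arc'' moves is always available; once this is established, the resulting support is a non-split dominating set, giving $\psi_{ns}(W_n) \le \psi(W_n)$ and hence equality.
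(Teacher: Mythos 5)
The paper states this theorem with no proof at all, so there is no argument of the authors' to compare yours against; judged on its own merits, your proposal has a genuine gap exactly at the step you yourself flag as delicate, and under the paper's literal definition that gap cannot be closed. In the all-singletons case no pebbling move is available, so the support $D$ is forced to equal the set of initially pebbled vertices and your ``hub-first, contiguous-arc'' strategy is simply not applicable. Now consider the configuration placing one pebble on every vertex of $W_n$ except on pairwise non-adjacent rim vertices (with $\psi(W_n)=n-2$ pebbles there are two or three empty vertices, depending on whether the hub is counted in $n$, and for $n$ large enough all of them can be chosen on the rim, pairwise non-adjacent). Then the hub lies in $D$, so $V-D$ consists solely of isolated rim vertices and $\langle V-D\rangle$ is disconnected. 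Your assertion that connectivity of $V-D$ ``follows from the hub's adjacency to every rim vertex'' is precisely where this fails: the pebbled hub belongs to $D$, not to $V-D$, and contributes nothing to the connectivity of the complement. Since no move can alter this configuration, it witnesses $\psi_{ns}(W_n)>\psi(W_n)$ under the definition as written, i.e.\ the statement you are trying to prove is false on that reading.

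The statement becomes true --- and essentially trivial --- only if one reads the definition as requiring the support to \emph{contain} a non-split dominating set rather than to \emph{be} one. In that case: if the hub is pebbled, $\{v_0\}$ is itself a non-split dominating set (its complement is the rim cycle, which is connected); if the hub is unpebbled, any dominating subset $D'$ of the pebbled rim vertices has $v_0\in V-D'$, and since $v_0$ is adjacent to every rim vertex, $\langle V-D'\rangle$ is connected automatically. So under the containment reading the problem collapses to ordinary domination cover pebbling of the wheel and Result~2 finishes immediately; none of your contiguous-arc bookkeeping is needed. You should therefore either adopt the containment reading explicitly and replace your case analysis with this two-line observation, or keep the literal reading and replace the theorem with the counterexample above.
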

	\subsection{NP- Completeness of Nonsplit Domination Problem}
	The proof is by reduction from the known NP-complete problem 'domination'. The domination problem asks `for a given graph $G$ and an integer $k$, does the graph $G$ contains a domination set of cardinality at most $k$?'. 
	\begin{thm}
		The nonsplit domination problem is NP-complete.
	\end{thm}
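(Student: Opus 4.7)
The plan is to reduce from the classical \textsc{Dominating Set} problem, whose NP-completeness is standard. First note that the non-split domination decision problem is in NP: given a candidate $D' \subseteq V(G')$, one verifies in polynomial time both that $D'$ dominates $G'$ and that the induced subgraph $G'[V(G') \setminus D']$ is connected (e.g., by a breadth-first search).

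For the reduction, from an instance $(G, k)$ of \textsc{Dominating Set}, construct $G'$ from $G$ by adjoining a new vertex $u$ adjacent to every vertex of $V(G)$ together with a pendant $u'$ whose only neighbor is $u$, and set $k' = k + 1$. The forward direction is clean: given a dominating set $D$ of $G$ with $|D| \le k$, take $D \cup \{u'\}$. This set has size at most $k+1$, dominates $G'$ (since $D$ covers $V(G)$, $u'$ covers itself, and $u'$ dominates $u$), and its complement $\{u\} \cup (V(G) \setminus D)$ is connected via the universal vertex $u$.

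For the reverse direction, given a non-split dominating set $D'$ of $G'$ with $|D'| \le k+1$, I would analyze by cases on $\{u, u'\} \cap D'$. If $u \notin D'$, then $u' \in D'$ (its only neighbor is $u$), and $D := D' \setminus \{u'\}$ is a dominating set of $G$ of size at most $k$, since every $v \in V(G) \setminus D$ must be dominated in $G'$ through some element of $V(G) \cap D' = D$. If $u \in D'$ but $u' \notin D'$, the pendant is isolated in $G'[V(G') \setminus D']$, forcing this induced subgraph to consist of $\{u'\}$ alone; then $V(G) \subseteq D'$, so $n \le k$, and $V(G)$ itself is a dominating set of $G$ of size at most $k$.

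The main obstacle is the remaining sub-case $u, u' \in D'$, in which $D_0 := D' \cap V(G)$ has size at most $k-1$ but may fail to dominate $G$, because some $v \in V(G) \setminus D_0$ could be covered in $G'$ only through the universal vertex $u$. The intended remedy is to lean on the non-split hypothesis, which guarantees that $G[V(G) \setminus D_0]$ is connected, and to show that swapping $u$ out of $D'$ for a carefully chosen $v^* \in V(G) \setminus D_0$ either directly yields a dominating set of $G$ of size at most $k$ or reduces matters to Case 1. Should this single swap be inadequate for pathological $G$, a natural fallback is to strengthen the gadget (for example, by attaching several independent pendants to $u$) so that including $u$ in any minimum non-split dominating set becomes strictly wasteful, eliminating the sub-case entirely. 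Checking that such a refinement still runs in polynomial time and preserves the equivalence of YES-instances is the delicate part of the proof.
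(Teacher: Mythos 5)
Your overall strategy --- reduce from \textsc{Dominating Set} by attaching a gadget that is adjacent to all of $V(G)$ and shifting the budget by one --- is essentially the paper's own approach (the paper joins both vertices of a $P_2$ to every vertex of $G$; you use a universal vertex $u$ with a pendant $u'$), and your NP-membership check, forward direction, and first two sub-cases of the reverse direction are correct and considerably more careful than the paper's. However, the sub-case you flag as ``the main obstacle'' ($u,u'\in D'$) is not a patchable technicality: it is exactly where the reduction fails. Whenever $G$ is connected, $\{u,u'\}$ is already a non-split dominating set of $G'$, since $u$ dominates every vertex of $G'$, and $G'[V(G')\setminus\{u,u'\}]=G$ is connected. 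Hence $\gamma_{ns}(G')\le 2$ for every connected $G$; taking $G=P_5$ and $k=1$, the instance $(G',k+1)$ is a YES-instance of non-split domination while $(G,k)$ is a NO-instance of domination. No swap argument can repair this, because in that sub-case $D_0=D'\cap V(G)$ may be empty and carries no information whatsoever about how to dominate $G$.

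The fallback of attaching several pendants to $u$ also does not work as described: each pendant's only neighbour is $u$, so either $u\in D'$ (reinstating the bad case, with the extra complication that pendants outside $D'$ become isolated in the complement) or every pendant must be placed in $D'$, which inflates the budget in the intended forward direction and destroys the equivalence. The underlying defect is that any vertex adjacent to all of $V(G)$ trivializes domination in the constructed graph; this is the same defect that invalidates the paper's own converse --- indeed, in the paper's construction the single vertex $p_1$ is by itself a non-split dominating set of $G^{*}$, so $\gamma_{ns}(G^{*})=1$ regardless of $G$. A correct reduction must use a gadget that does not dominate $V(G)$ on its own, so neither your argument nor the paper's establishes the theorem as written.
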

	\begin{proof}
		Let $G$ be any graph. Construct $G^{*}$ as follows: Let the graph $G$ be on the first level and let the Path $P_2$ be on the second level. Join each vertex of graph $G$ to both the vertices of the path $P_2$ (see Figure 1). It is clear that the construction can be done in polynomial time. \\
		Now, let $G$ be a graph with domination number $k$. Then, $k+1$ vertices will form the non-split domination set for the graph $G^{*}$ iff $G$ has a dominating set with $k$ vertices. \\
		Suppose that the graph $G$ has a dominating set with cardinality $k$. Choose any one vertex from the path $P_2$. Then, it is straightforward to see that these $k+1$ vertices form a dominating set for the graph $G^{*}$. Moreover, if we remove the $k+1$ vertices and the edges incident to it, we get a connected graph. Thus, the non-split domination number of $G^{*}$ is $k+1$ whenever $G$ has a domination set with cardinality $k$.\\
		Conversely, assume that $k+1$ vertices form the non-split dominating set for the graph $G^{*}$. Then, by the definition $k+1$ vertices form a dominating set for $G^{*}$, and hence the domination set for graph $G$ has cardinality $k$. Thus, $G$ has a dominating set with $k$ vertices whenever $G^{*}$ has a non-split domination set with order $k+1$. 
	\end{proof}
	\begin{figure}[h!]
		\centering
		\includegraphics[width=8cm,height=5cm]{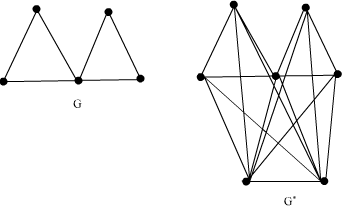}
		\caption{Construction of $G$ from $G^{*}$}
		\label{Sel}
	\end{figure}
	\subsection{Nonsplit Domination Cover Pebbling Number for the Middle Graph of Path }
	\begin{thm}
		The non-split domination cover pebbling number of the middle graph of the path  is, $\psi_{ns}(M(P_n))= 2^{n+1}-3$.
	\end{thm}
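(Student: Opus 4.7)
The plan is to identify a natural non-split dominating set $D$ of $M(P_n)$ and pin down the worst-case initial configuration by a single-stack argument at a pendant vertex. I label $P_n$ with vertices $v_1,\ldots,v_n$ and edges $e_i = v_iv_{i+1}$, so that in $M(P_n)$ the edge-vertices $e_1-e_2-\cdots-e_{n-1}$ form a spine and each $v_i$ is joined to the edge-vertices $e_{i-1}$ and $e_i$ incident to it in $P_n$. My candidate target is $D = \{v_1,v_2,\ldots,v_n\}$. Checking that $D$ is a non-split dominating set is immediate: each $e_i$ is adjacent to $v_i \in D$, and the complement $V(M(P_n))\setminus D = \{e_1,\ldots,e_{n-1}\}$ induces the connected path $e_1-e_2-\cdots-e_{n-1}$.

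For the upper bound I would compute the single-stack cover-pebbling cost of $D$ from every source and take the maximum. From the pendant $v_1$ one has $d(v_1,v_i)=i$ for $2\le i\le n$, giving
\[
\sum_{v\in D}2^{d(v_1,v)} \;=\; 1 + 2^2 + 2^3 + \cdots + 2^n \;=\; 2^{n+1}-3,
\]
and by symmetry the same value is attained at $v_n$. A direct check of internal $v_i$'s (for which $d(v_i,v_j)=|i-j|+1$) and of edge-vertices $e_j$'s shows every other source gives a strictly smaller sum, so the maximum single-stack cost for target $D$ is $2^{n+1}-3$; invoking the cover pebbling stacking theorem then yields $\psi_{ns}(M(P_n))\le 2^{n+1}-3$, since any configuration of that many pebbles can be moved to cover $D$.

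For the lower bound I would place exactly $2^{n+1}-4$ pebbles on $v_1$ alone and argue that no non-split dominating set is reachable. The structural ingredient is that, because $v_1$ and $v_n$ are pendants whose only neighbors are $e_1$ and $e_{n-1}$ respectively, any NSDS $D'$ of $M(P_n)$ must either contain both $v_1$ and $v_n$ or else satisfy $V\setminus D' = \{v_1\}$ or $V\setminus D' = \{v_n\}$ — otherwise the isolated pendant either destroys domination or forces the complement to be disconnected. For each of these three shapes I would bound below the pebbles at $v_1$ needed to reach $D'$ and check that the total is at least $2^{n+1}-3$.

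I expect the main obstacle to be precisely this lower bound case analysis: one must rule out NSDSes obtained by substituting an interior $v_i$ by a neighboring edge-vertex $e_{i-1}$ or $e_i$, which at first sight can reduce the cover cost from $v_1$. The key observation that has to be argued carefully is that the connectedness of $V-D'$ severely restricts which such swaps are allowed, so that each admissible swap either makes no saving or destroys domination/connectivity; showing that no admissible NSDS beats $2^{n+1}-3$ from the pendant source is the delicate step that carries the lower bound.
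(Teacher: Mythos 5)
Your overall architecture (target set $D=\{v_1,\dots,v_n\}$, single\-/stack cost $2^{n+1}-3$ from a pendant, lower bound by stacking $2^{n+1}-4$ pebbles on $v_1$) is the same as the paper's, but the lower bound has a genuine gap, and it is exactly the one you flag in your last paragraph as "the delicate step" --- except that the delicate step cannot be carried out as you hope. The swap of $v_2$ for the edge-vertex $e_1$ is admissible: the set $D'=\{v_1,e_1,v_3,v_4,\dots,v_n\}$ dominates $M(P_n)$ (here $v_2\sim e_1$ and each $e_j$ with $j\ge 2$ is adjacent to $v_{j+1}$), and its complement $\{v_2,e_2,e_3,\dots,e_{n-1}\}$ is connected because $v_2\sim e_2$ and the $e_j$'s form a path. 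From a stack on $v_1$ this $D'$ costs $1+2^{1}+\sum_{i=3}^{n}2^{i}=2^{n+1}-5$, so $2^{n+1}-4$ pebbles on $v_1$ \emph{do} reach a non-split dominating set. Your proposed witness therefore proves nothing (the paper uses the same witness and has the same defect), and your structural trichotomy does not save it, since $D'$ contains both pendants and still beats $2^{n+1}-3$. Either the stated value is wrong or the lower bound must come from an entirely different configuration; as written, the bound $\psi_{ns}(M(P_n))\ge 2^{n+1}-3$ is not established.

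The upper bound also has a gap: the cover pebbling stacking theorem (Vuong--Wyckoff, Sj\"ostrand) is proved for weight functions that are \emph{strictly positive on every vertex}. Here the edge-vertices $e_j$ carry weight zero, and with vanishing weights the stacking theorem is known to fail in general, so you cannot conclude that the maximum single-stack cost of covering the fixed set $D$ bounds the cost for an arbitrary distributed configuration. (The paper silently makes the same leap by checking only single-source configurations.) Moreover, for a domination-type cover pebbling number the quantifiers are asymmetric: the upper bound only needs \emph{some} non-split dominating set to be reachable from each configuration, while the lower bound must rule out \emph{every} non-split dominating set from the bad configuration. A correct proof has to respect both of these points; neither your proposal nor the paper currently does.
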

	\begin{proof}
		Let $V(P_n)=\{x_1, x_2, x_3,\cdots,x_n\}$ be the vertices of path $P_n$ and $y_1, y_2, \cdot, y_{n-1}$ be the added vertices corresponding to the edges $e_1, e_2,\cdots, e_{n-1}$ of $P_n$ to obtain $M(P_n)$.Then the total number of vertices is $2n-1$ and the edges are $3n-4$. Consider the non-split domination set $D=\{y_i| 1\leq i \leq n-1 \}$ with containing $n$ vertices. The vertices of $D$ dominate all the vertices in $M(P_n)$ and the induced subgraph $<V-D>$ is a connected graph. Placing $2^{n+1}-4$ pebbles on any one of the end vertices we could cover the non-split dominating set's vertices $x_{n}$ to $x_2$, if we place all the pebbles at $x_n$. And also we don't get the non-split domination set. Hence, the non-split domination cover pebbling number of $M(P_n)$ is $\psi_{ns}(M(P_n))\geq 2^{n+1}-3$.  \\
		
		\noindent Considering the configuration of $C$ with  $2^{n+1}-3$ on the vertices of $M(P_n)$, we prove the sufficient condition to cover the non-split dominating set. \\
		
		\noindent \textbf{Case 1: } Let the source vertex be $x_1$.\\
		If we place all the pebbles on $x_1$, to cover $x_n$ we need $ 2^{n}$ pebbles.  Thus, to cover the next furthest non-split dominating set is $x_{n-1}$ which requires $2^{n-1}$. Likewise, we get the series of Pebble distribution $2^{n} + 2^{n-1}+ 2^{n-2}+\cdots+ 2^2 +2^0$ to cover all the vertices of the non-split dominating set. Thus, we used $\sum_{k=2}^{n} 2^k +1= 2^{n+1}-3$ pebbles.\\
		
		\noindent  \textbf{Case 2: } Let the source vertex be $x_l,\ 1<l\leq n-1$.\\
		Let us place all the pebbles at $x_l$. If $l<\lfloor\frac{n}{2} \rfloor$, then we need $2^{(n-l) +1}-3$ pebbles to cover $x_l$ to $x_n$ of the non-split dominating set. Now to cover the remaining non-split dominating set we need $2^{l+1}-4$. Thus, we used $2^{(n-l) +1}+ 2^{l+1}-7 < 2^{n+1}-3$ pebbles. 
		\\
		
		\noindent \textbf{Case 3: } Let the source vertex be $y_1$.\\
		Let us consider the source vertex either any one of the non-split dominating vertices of $y_1$ or $y_n-1$. Consider all the pebbles placed on $y_1$. Then we need 4 pebbles to cover the adjacent vertices  $x_1$ and $x_2$. Then to cover the remaining  non-split dominating set of vertices we need $2^n-4$ pebbles. Thus, we used $2^n < 2^{n+1}-3$ pebbles.  Hence, $\psi_{ns}(M(P_n))= 2^{n+1}-3$.
	\end{proof}
	\subsection{Nonsplit Domination Cover Pebbling Number for The Middle Graph of Cycle Graphs}
	\begin{thm}
		The non-split domination cover pebbling number of the middle graph of the cycle  is,\\ $\psi_{ns}(M(C_n))= \begin{cases}  2 \sum_{k=0}^{\lceil \frac{n}{2}\rceil}2^{k}-8  & \  \ n \ {is\ odd}\\
			\\
			\sum_{k=1}^{\lfloor \frac{n}{2}\rfloor+1}2^{k}-8 +  \sum_{k=1}^{\lfloor \frac{n}{2}\rfloor}2^{k} 
			& \  \ n \ {is \ even}. \end{cases}$\\
	\end{thm}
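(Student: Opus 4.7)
The plan is to adapt the argument for $M(P_n)$ in the previous theorem to the cyclic setting, accounting for the fact that in $M(C_n)$ any source vertex has two directions in which to route pebbles around the cycle. First, I would fix a non-split dominating set $D$ of $M(C_n)$ analogous to the one used for $M(P_n)$: taking all (or all but a short arc of) the subdivision vertices $y_i$ yields a dominating set, and omitting the right piece makes $\langle V-D\rangle$ connected so that the non-split condition holds. The triangular structure $\{y_{i-1},y_i,x_i\}$ at each edge of $C_n$ should dictate precisely which vertices to keep so that $|D|=\gamma_{ns}(M(C_n))$.

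For the lower bound, I would exhibit a configuration of $N-1$ pebbles (with $N$ the claimed value) concentrated on a single vertex that is ``antipodal'' to $D$, and show that no sequence of pebbling moves can cover a non-split dominating set. The form of the formula, essentially a geometric sum $\sum 2^k$ running each way around the cycle, pinpoints this worst vertex as the one whose distances to the two farthest vertices of $D$ are simultaneously maximized; the total cost is then the sum of the two cover-pebbling geometric series, one for each arc of the cycle, mirroring the role of $2^{n+1}-3$ in the path theorem.

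For the upper bound, I would take an arbitrary configuration $C$ of $N$ pebbles and case-analyse by the source vertex (the one holding the most pebbles), distinguishing (i) source an $x$-vertex from (ii) source a $y$-vertex, with sub-cases indexed by position around the cycle. In each case I would split $D$ into two arcs on either side of the source and apply the cover-pebbling estimate $\sum_{v\in D}2^{d(s,v)}\le N$ separately along each arc; when the source is off the antipodal point, the shorter-arc contribution shrinks strictly, so the total cost falls below the bound, confirming sufficiency. This is exactly the three-case template already used for $M(P_n)$, doubled for the cycle's two-way routing.

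The principal obstacle will be handling the parity dichotomy between odd and even $n$ and the placement of the gap in $D$. When $n$ is odd, the two arcs emanating from the antipodal source are of equal length $\lceil n/2\rceil$, yielding the symmetric $2\sum_{k=0}^{\lceil n/2\rceil}2^k-8$ expression; when $n$ is even, the antipodal point lies on an edge, making the two arcs differ in length by one and producing the asymmetric $\sum_{k=1}^{\lfloor n/2\rfloor+1}2^k+\sum_{k=1}^{\lfloor n/2\rfloor}2^k-8$ form. Placing the gap needed for connectedness of $\langle V-D\rangle$ exactly at this antipodal location, so that the routing cost is not inflated by having to serve an extra vertex just beyond the gap, is the subtle bookkeeping step that I expect to take most of the care in the proof.
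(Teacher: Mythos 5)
Your overall template is the same as the paper's: fix one non-split dominating set $D$, get the lower bound from a single-vertex configuration whose cost is a geometric sum $\sum 2^{d}$ taken along the two arcs of the cycle, and get the upper bound by a case analysis on the source vertex ($x$-type versus $y$-type), splitting $D$ into two arcs and reusing the $M(P_n)$ estimate on each; the parity discussion is also exactly the paper's. So the route is not genuinely different. However, there is one concrete error in your setup: the dominating set cannot be built from the subdivision vertices. In $M(C_n)$ the subdivision vertices $y_1,\dots,y_n$ induce a cycle, while the original vertices $x_1,\dots,x_n$ form an independent set, each $x_i$ being adjacent only to $y_{i-1}$ and $y_i$. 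If $D$ contains all, or all but a short arc, of the $y_i$'s, then $\langle V-D\rangle$ contains many $x_i$'s with none of their neighbours, so it shatters into isolated vertices and the non-split condition fails; no choice of ``gap'' repairs this. The correct choice is the reverse of yours, and is what the paper uses: $D=\{y_n, x_2,\dots,x_{n-1}\}$, i.e.\ all but two of the original vertices plus one subdivision vertex, so that $\langle V-D\rangle$ is the path $y_1\cdots y_{n-1}$ with $x_1$ and $x_n$ attached at its ends. This is not mere bookkeeping: with the targets being the $x_i$'s (outer, pendant-like layer) rather than the $y_i$'s (inner cycle), every target costs an extra factor of $2$ relative to reaching the inner cycle, and that is where the specific constants in the stated formula come from.

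Two smaller remarks. First, the ``gap'' $\{x_1,x_n\}$ of the paper's $D$ sits adjacent to the lower-bound source $x_1$, not antipodal to it; since $D$ wraps around the whole cycle, what matters is not an antipodal point of $D$ but the vertex maximizing $\sum_{v\in D}2^{d(s,v)}$, which turns out to be an $x$-vertex because the $x$'s are farther from everything than the $y$'s. Second, your formulation of the lower bound (show that \emph{no} non-split dominating set can be covered from the bad configuration) is the right one, and is actually stronger than what the paper verifies, which only checks the single chosen $D$; if you carry out your plan you should keep that stronger formulation.
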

	\begin{proof}
		Let $V(C_n)= \{x_1, x_2, x_3, \cdots, x_n\}$ and $y_1, y_2,\cdots, y_n$ be the inserted vertices corresponding to edges $e_1, e_2, \cdots, e_n$ of $C_n.$ to construct the middle graph of Cycle $M(C_n)$. Then the total number of vertices is $2n$ and the edges are $3n$. Let the non-split dominating set  $D=\{{y_i}\cup {x_1, x_2, x_3,\cdots, x_j}\}$ where $1\leq i,j\leq n$ and $x_j\ne N[y_i]$. Thus, $D$ dominates all the vertices of $M(C_n)$, and $<V-D>$ is connected. The total number of vertices in $D$ is $n-1$.\\

		\noindent \textbf{Case  1:} When $n$ is odd.\\
		Without loss of generality, Let $D=\{y_n, x_2, x_3,\cdots, x_{n-1}\}.$
		Placing $ 2 \sum_{k=0}^{\lceil \frac{n}{2}\rceil}2^{k}-7$ pebbles on the source vertex $x_1$ we can not put one pebble each on all the vertices of $D$. Hence, $\psi_{ns}(M(C_n))\geq  2 \sum_{k=0}^{\lceil \frac{n}{2}\rceil}2^{k}-8,$ when n is odd.
		
		\noindent Distribution of  $2 \sum_{k=0}^{\lceil \frac{n}{2}\rceil}2^{k}-8$ pebbles on the configuration of $C$, we cover all the vertices of $D$. Now we prove the sufficient condition for $M(C_n)$, when $n$ is odd.\\
		
		\noindent \textbf{Case 1.1:} Let the source vertex be $x_1$.\\
		Using the \textbf{Theorem 5} we can cover the non-split dominating set of $D$ from $x_2$ to $x_{\lceil\frac{n}{2}\rceil}$. The total number of pebbles used are $2^{\lceil\frac{n}{2}\rceil +1} -4$. Similarly, to cover the remaining non-split dominating set  \{$y_n, x_{n-1},\cdots,x_{\lceil\frac{n}{2}\rceil+1}$ we use $2^{\lceil\frac{n}{2}\rceil+1} -6$ pebbles. Hence we have spent total $2^{\lceil\frac{n}{2}\rceil+2}-10=  2 \sum_{k=0}^{\lceil \frac{n}{2}\rceil}2^{k}-8$ pebbles. 
		\noindent \textbf{Case 1.2:} Let the source vertex be any one of the vertices of  $y_i$, $1\leq i \leq n$.
		Without loss of generality, let the source vertex be $y_n$. 	Using the \textbf{Theorem 5} we can cover the non-split dominating set of $D$ from $x_2$ to $x_{\lceil\frac{n}{2}\rceil}$. The total number of pebbles used are $2^{\lceil\frac{n}{2}\rceil +1} -4$. Similarly, to cover the remaining non-split dominating set  \{$y_n, x_{n-1},\cdots,x_{\lceil\frac{n}{2}\rceil+1}$ we use $2^{\lceil\frac{n}{2}\rceil} -3$ pebbles. Hence we have spent total $3(2^{\lceil\frac{n}{2}\rceil})-7<  2 \sum_{k=0}^{\lceil \frac{n}{2}\rceil}2^{k}-8$ pebbles.  Hence,   $\psi_{ns}(M(C_n))\leq  2 \sum_{k=0}^{\lceil \frac{n}{2}\rceil}2^{k}-8$, When n is odd.\\
		
		\noindent \textbf{Case 2:} When $n$ is even.\\
		Without loss of generality, Let $D=\{y_n, x_2, x_3,\cdots, x_{n-1}\}.$
		Placing $\sum_{k=1}^{\lfloor \frac{n}{2}\rfloor+1}2^{k}-7 +  \sum_{k=1}^{\lfloor \frac{n}{2}\rfloor}2^{k}$ pebbles on the source vertex $x_1$ we can not put one pebble each on all the vertices of $D$. Hence, $\psi_{ns}(M(C_n))\geq  \sum_{k=1}^{\lfloor \frac{n}{2}\rfloor+1}2^{k}-8 +  \sum_{k=1}^{\lfloor \frac{n}{2}\rfloor}2^{k}$, When n is even.
		
		\noindent Distribution of  $\sum_{k=1}^{\lfloor \frac{n}{2}\rfloor+1}2^{k}-8 +  \sum_{k=1}^{\lfloor \frac{n}{2}\rfloor}2^{k}$ pebbles on the configuration of $C$, we cover all the vertices of $D$. Now we prove the sufficient condition for $M(C_n)$, when $n$ is even.\\
		
		\noindent \textbf{Case 2.1:} Let the source vertex be $x_1$.\\
		Using the \textbf{Theorem 5} we can cover the non-split dominating set of $D$ from $x_2$ to $x_{\lceil\frac{n}{2}\rceil+1}$. The total number of pebbles used are $2^{\lceil\frac{n}{2}\rceil+2} -4$. Similarly, to cover the remaining non-split dominating set  \{$y_n, x_{n-1},\cdots,x_{\lceil\frac{n}{2}\rceil+2}$ we use $2^{\lceil\frac{n}{2}\rceil+1} -6$ pebbles. Hence we have spent total $3(2^{\lceil\frac{n}{2}\rceil+1})-10= \sum_{k=1}^{\lfloor \frac{n}{2}\rfloor+1}2^{k}-8 +  \sum_{k=1}^{\lfloor \frac{n}{2}\rfloor}2^{k} $ pebbles. 
		\noindent \textbf{Case 2.2:} Let the source vertex be any one of the vertices of $y_i$, $1\leq i \leq n$.
		Without loss of generality, let the source vertex be $y_n$. 	Using the \textbf{Theorem 5} we can cover the non-split dominating set of $D$ from $x_2$ to $x_{\lceil\frac{n}{2}\rceil+1}$. The total number of pebbles used are $2^{\lceil\frac{n}{2}\rceil +2} -4$. Similarly, to cover the remaining non-split dominating set  \{$y_n, x_{n-1},\cdots,x_{\lceil\frac{n}{2}\rceil+2}$ we use $2^{\lceil\frac{n}{2}\rceil} -3$ pebbles. Hence we have spent total $5(2^{\lceil\frac{n}{2}\rceil})-7<  \sum_{k=1}^{\lfloor \frac{n}{2}\rfloor+1}2^{k}-8 +  \sum_{k=1}^{\lfloor \frac{n}{2}\rfloor}2^{k}$ pebbles.  Hence,   $\psi_{ns}(M(C_n))\leq  \sum_{k=1}^{\lfloor \frac{n}{2}\rfloor+1}2^{k}-8 +  \sum_{k=1}^{\lfloor \frac{n}{2}\rfloor}2^{k}$, when n is even. Hence proved.\\

	\end{proof}
	\subsection{Non-split Domination Cover Pebbling Number for The Middle Graph of Wheel Graphs}
	\begin{thm}
		The non-split domination cover pebbling number of the middle graph of the wheel  is,\\ $\psi_{ns}(M(W_n))= \begin{cases}  (\lfloor \frac{n}{2}\rfloor)8 +6 & \  \ n \ {is\ odd}\\
			\\
			(\lfloor \frac{n}{2}\rfloor)8 +  10
			& \  \ n \ {is \ even}. \end{cases}$\\
	\end{thm}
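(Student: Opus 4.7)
The plan is to follow closely the template of Theorem 6 (the cycle middle-graph case). Write $c$ for the hub of $W_n$, $x_1,\ldots,x_n$ for the rim vertices, and $y_i,z_i$ for the subdivision vertices of $M(W_n)$ corresponding to the rim edge $x_ix_{i+1}$ and the spoke $cx_i$. Two structural facts are crucial: the vertices $z_1,\ldots,z_n$ form a clique (since every spoke touches $c$), and $y_i$ is adjacent to $\{x_i,x_{i+1},y_{i-1},y_{i+1},z_i,z_{i+1}\}$, while $z_i$ is adjacent to $c,x_i,y_{i-1},y_i$ and every other $z_j$. Consequently, from a rim vertex $x_1$ the distance profile in $M(W_n)$ is: distance $1$ to $y_1,y_n,z_1$; distance $2$ to $x_2,x_n,y_2,y_{n-1},c$ and every $z_j$ with $j\ne 1$; and distance $3$ to all remaining $x_j, y_j$.

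Step~1 is to choose a specific non-split dominating set $D$. Taking $D=\{c\}\cup\{y_i:i\in S\}$ with $S$ an alternating selection of rim-edge indices gives a dominating set once $S$ is chosen so every rim vertex has an adjacent $y$ in $D$; the complement $V-D$ stays connected through the $z$-clique because each $x_j\notin D$ is adjacent to $z_j\in V-D$. For odd $n$ the alternation closes cleanly with $|S|=\lceil n/2\rceil$, and $D$ has exactly $\lfloor n/2\rfloor$ vertices at distance $3$ from $x_1$; for even $n$ the alternation does not wrap, so one extra vertex must be adjoined to patch the leftover rim vertex, which is precisely what produces the shift from $+6$ to $+10$ in the constant term.

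Step~2 (lower bound): place $8\lfloor n/2\rfloor + 5$ pebbles (odd $n$) or $8\lfloor n/2\rfloor+9$ pebbles (even $n$) on $x_1$. By the construction of $D$, $\sum_{u\in D}2^{d(u,x_1)}$ equals $8\lfloor n/2\rfloor + 6$ or $+10$ respectively, so one pebble short of the claimed value cannot cover $D$, giving the matching lower bound. Step~3 (upper bound): assume the full count is placed on a single source $v$ and split into four cases, $v=x_1$, $v=x_l$ for an interior $l$, $v=y_i$ or $z_i$, and $v=c$. In the first case equality holds by Step~1. In each of the others, most distances to $D$ drop by one because $v$ is adjacent (or near-adjacent) to the hub clique, and Theorem~5 applied along the two rim halves bounds the covering cost strictly below the claimed total.

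The main obstacle is Step~1: committing to a $D$ so that (i) $D$ dominates $M(W_n)$, (ii) $V-D$ is connected, and (iii) the cost $\sum_{u\in D}2^{d(u,x_1)}$ from the worst rim source is \emph{exactly} $8\lfloor n/2\rfloor + 6$ or $+10$, with precisely $\lfloor n/2\rfloor$ of its elements sitting at distance $3$. The $z$-clique provides many shortcuts that make the distance bookkeeping delicate, and the parity split arises entirely from whether the alternating choice of $y_i$'s closes the cycle. Once $D$ has been pinned down, the case analysis for the non-rim sources is routine and parallels Case~1.2 and Case~2.2 of Theorem~6.
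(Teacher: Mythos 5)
Your outline follows the same template as the paper's proof (one ``central'' vertex plus an alternating selection of rim--edge subdivision vertices for $D$, a stacked configuration on a rim vertex for the lower bound, and a case analysis on the source for the upper bound), so it is not a different route; the problem is that the step you yourself flag as ``the main obstacle'' is exactly where the content of the theorem lives, and for the $D$ you propose it does not work out. With $D=\{c\}\cup\{y_i: i\in S\}$ and $S$ an alternating selection, $S$ necessarily contains an index whose rim edge is incident to $x_1$ (distance $1$) and, in the even case, another whose rim edge is adjacent to one of $x_1$'s rim edges (distance $2$); together with $d(x_1,c)=2$, at most $|S|-2$ elements of $D$ sit at distance $3$. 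Hence $\sum_{u\in D}2^{d(u,x_1)}\le 8(|S|-2)+4+4+4$, which is strictly below $8\lfloor n/2\rfloor+6$. Your own $D$ is therefore a non-split dominating set that \emph{can} be covered from $x_1$ with fewer than the claimed number of pebbles, so the witness configuration of $8\lfloor n/2\rfloor+5$ (resp.\ $+9$) pebbles on $x_1$ is in fact solvable and your Step~2 collapses. Property (iii) in your Step~1 is not a bookkeeping detail to be checked later; it is false as stated for this $D$. The paper avoids putting the hub in $D$ and instead anchors $D$ at a spoke-subdivision vertex $y_i$ with the rim selection constrained to avoid $N[y_i]$, which changes which elements land at distance $3$.

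Two further gaps. First, a lower bound of this type requires showing that \emph{no} non-split dominating set can be covered from $x_1$ with one fewer pebble than claimed, i.e.\ a minimization over all admissible $D$, not a cost computation for one chosen $D$; your outline (like the paper's, but more damagingly, since your $D$ is cheap) never addresses the other candidates. Second, your indexing does not match the paper's: the paper's $W_n$ has hub $x_0$ and only $n-1$ rim vertices, whereas you take $n$ rim vertices $x_1,\dots,x_n$, so even a correct count in your convention would return the stated formula shifted by one step of $8$. Until you (a) commit to a $D$ whose exact covering cost from the worst rim vertex equals the stated value, and (b) argue that no other non-split dominating set is cheaper from that vertex, the proposal does not establish either inequality.
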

	\begin{proof}
		Let $V(W_n)= \{x_0, x_1, x_2, x_3, \cdots, x_{n-1}\}$ be the vertices of $W_n$ and $x_0$ be the center vertices of $W_n$. Let $y_1, y_2,\cdots, y_{n-1}$ be the inserted vertices corresponding to edges $v_0v_i$ where $1\leq i\leq n-1$ and $a_1, a_2, \cdots,a_{n-2}$ be the inserted vertices on the edges $x_jx_{j+1} $ where $1\leq j\leq n-2$ and $a_{n-1}$ lies in $x_{n-1}x_1$. Also the total number of edges in the $M(W_n)$ is $3(n-1)+1$.\\

		\noindent \textbf{Case  1:} When $n$ is even. Here $d(y_i)=n+3 \ (1\leq i\leq n-1)$\\
		\noindent \textbf{Sub case 1.1:} Suppose $i$ is odd.\\
		Consider the set $D=\{y_i, a_{i+1}, a_{i+3}, \cdots, a_{n-2} a_{i-2}, a_{i-5}\cdots, a_1\}$ where $a_j \ne N(y_i)$ and $j=1,3,\cdots, i-2, i+1, i+3, \cdots, n-1$ be a non-split dominating set and $<V_D>$ is connected which is the minimum Non-split dominating set of $M(W_n)$. If we place 	$(\lfloor \frac{n}{2}\rfloor)8 +  9$ pebbles on $x_1$, we can not cover all the vertices of $D$. Hence we require $	(\lfloor \frac{n}{2}\rfloor)8 +  10$ pebbles to cover $D$. Hence  $\psi_{ns}(M(W_n))\geq	(\lfloor \frac{n}{2}\rfloor)8 +  10$, When $n$ is even.

		\noindent Distribution of  	$(\lfloor \frac{n}{2}\rfloor)8 +  10$ pebbles on the configuration of $C$, we cover all the vertices of $D$. Now we prove the sufficient condition for $M(W_n)$, when $n$ is even.\\
		
		\noindent \textbf{Subcase 1.2:} Let the source vertex be $a_k$, where $k$ is not in $D$.\\
		Without loss of generality, let the source vertex be $a_1$. Since there will be 	$\lfloor \frac{n}{2}\rfloor -3$ vertices having the distance 3 and 2 vertices having the distance 2 from the source vertex then we need $(\lfloor \frac{n}{2}\rfloor)8 +  6 < (\lfloor \frac{n}{2}\rfloor)8 +  10$ Pebbles.
		\noindent \textbf{Subcase 1.3:} Let the source vertex be $x_0$. \\
		Let  us place all the pebbles on $x_0$. Since all the dominating vertices are at the distance of 2 from the center except $y_i$, we need $2+ \lfloor \frac{n}{2}\rfloor)4$ pebbles to cover the non-split dominating set. Thus, $\psi_{ns}(M(W_n))\geq	(\lfloor \frac{n}{2}\rfloor)8 +  10$, When $n$ is even and $i$ is odd. 
		
		\noindent \textbf{Case 1.2:} When $i$ is even.\\
		Consider the dominating set $D=\{y_i,a_{i+1},a_{i+3},\cdots,a_{n-1},a_{i-2},a_{i-4},\cdots,a_2\}$ where $a_j\ne N[y_i]$ and $ j= 2,4,\cdots,i-2,i+1,i+3,\cdots, n-1$ be the minimum dominating set and $<V_D>$ is connected. Now to prove the Non-split domination pebbling number of $M(W_n)$, when n is even and $i$ is even, we can follow the same method of \textbf{Case 1, Subcase 1.1, 1.2 and 1.3.}.
		
		\noindent \textbf{Case 2:} When $n$ is odd.\\
		Here $d(y_i)=n+3 \ (1\leq i\leq n-1)$\\
		\noindent \textbf{Subcase 2.1:} Suppose $i$ is odd.\\
		Consider the set $D=\{y_i, a_{i+1}, a_{i+3}, \cdots, a_{n-1} a_{i-2}, a_{i-4}\cdots, a_1\}$ where $a_j \ne N(y_i)$ and $j=1,3,\cdots, i-2, i+1, i+3, \cdots, n-1$ be a non-split dominating set and $<V_D>$ is connected which is the minimum Non-split dominating set of $M(W_n)$. If we place 	$(\lfloor \frac{n}{2}\rfloor)8 +  5$ pebbles on $x_1$, we can not cover all the vertices of $D$. Hence we require $	(\lfloor \frac{n}{2}\rfloor)8 +  6$ pebbles to cover $D$. Hence  $\psi_{ns}(M(W_n))\geq	(\lfloor \frac{n}{2}\rfloor)8 +  6$, When $n$ is odd.

		\noindent Distribution of  	$(\lfloor \frac{n}{2}\rfloor)8 +  6$ pebbles on the configuration of $C$, we cover all the vertices of $D$. Now we prove the sufficient condition for $M(W_n)$, when $n$ is odd.\\
		
		\noindent \textbf{Subcase 1.2:} Let the source vertex be $a_k$, where $k$ is not in $D$.\\
		Without loss of generality, let the source vertex be $x_1$. Since there will be 	$\lfloor \frac{n}{2}\rfloor -2$ vertices having the distance 3, one vertex at the distance of two and one vertex is adjacent to the source vertex, then we require $ (\lfloor \frac{n}{2}\rfloor)8 + 6$ pebbles.\\
		
		\noindent \textbf{Subcase 1.3:} Let the source vertex be $x_0$. \\
		Let  us place all the pebbles on $x_0$. Since all the dominating vertices are at the distance of 2 from the center except $y_i$, we need $2+ \lfloor \frac{n}{2}\rfloor)4$ pebbles to cover the non-split dominating set.
		\noindent \textbf{Subcase 1.4:} Let the source vertex be any one of the vertices of  $y_i$ where $i$ is odd. \\
		Let  us place all the pebbles on $y_1$. Since all the dominating vertices are at the distance of 2 from  $y_1$   then we need  $1+ \lfloor \frac{n}{2}\rfloor)4$ pebbles to cover the non-split dominating set. Thus, $\psi_{ns}(M(W_n))=(\lfloor \frac{n}{2}\rfloor)8 +  6$, When $n$ is even and $i$ is odd. 
		
		\noindent \textbf{Case 1.2:} When $i$ is even.\\
		Consider the dominating set $D=\{y_i,a_{i+1},a_{i+3},\cdots,a_{n-1},a_{i-2},a_{i-4},\cdots,a_2\}$ where $a_j\ne N[y_i]$ and $ j= 2,4,\cdots,i-2,i+1,i+3,\cdots, n-1$ be the minimum dominating set and $<V_D>$ is connected. Now to prove the Non-split domination pebbling number of $M(W_n)$, when n is odd and $i$ is even, we can follow the same method of \textbf{Case 2, Subcase 2.1, 2.2, 2.3 and 1.4.}. Thus proved.
		Hence, the non-split domination cover pebbling number of $M(W_n) =(\lfloor \frac{n}{2}\rfloor)8 +6$. 
		
		\subsection{Non-split Domination Cover Pebbling Number for The Middle Graph of Fan Graphs}
		\begin{thm}
			The non-split domination cover pebbling number of the middle graph of the fan  is,\\ $\psi_{ns}(M(F_n))= \begin{cases}  (\lceil \frac{n}{2}\rceil-1)8 +6 & \  \ n \ {is\ odd}\\
				\\
				(\lfloor \frac{n}{2}\rfloor-2)8 +  6
				& \  \ n \ {is \ even}. \end{cases}$\\
		\end{thm}
		\begin{proof}
			It follows from Theorem 7.
		\end{proof}
		
	\end{proof}


\begin{thebibliography}{9}
		
		
		\bibitem{1} F.R.K. Chung, \textit{Pebbling in hypercubes}, SIAMJ. Disc. Math., 2(4) (1989), pp. 467-472.
		
		\bibitem{2} G. Hurlbert, \textit{A survey of graph pebbling}, Congressus Numerantium 139 (1999), pp. 41-64.
		
		
		
		\bibitem{4} J.A. Bondy and U.S.R. Murty, \textit{Graph theory with applications}, (1977).
		
		\bibitem{5} Gary Chartrand, \textit{Introductory Graph theory}, Dover books on Mathematics, (1985).
		
		
		\bibitem{6}	Kulli, V. R., and B. Janakiram. "The non-split domination number of a graph." Indian Journal of Pure and Applied Mathematics 31.4 (2000): 441-448.	
		\bibitem{7} Crull, B., Cundiff, T., Feltman, P., Hurlbert, G. H., Pudwell, L., Szaniszlo, Z., and Tuza, Z. (2005). The cover pebbling number of graphs. Discrete Mathematics, 296(1), 15-23. https://doi.org/10.1016/j.disc.2005.03.009
		
		\bibitem{8} Cockayne, E. J. "Domination of undirected graphs—a survey." Theory and Applications of Graphs. Springer, Berlin, Heidelberg, 1978. 141-147.
		
		
	\end{thebibliography}
\end{document}